\documentclass[11pt]{article}
\usepackage{eurosym}
\usepackage{amsmath,amssymb,amsthm,amsfonts}
\usepackage{subcaption}
\usepackage{appendix}
\usepackage{amsmath}
\usepackage{mathrsfs}
\usepackage{amsfonts}
\usepackage{amssymb}
\usepackage{multirow}
\usepackage{rotating}
\usepackage{xcolor}
\usepackage{lscape}
\usepackage{verbatim}
\usepackage{graphicx}
\usepackage{cancel}
\usepackage[normalem]{ulem}
\usepackage{url}
\usepackage{booktabs}
\usepackage{natbib}
\usepackage{apalike}
\usepackage{algorithm}
\usepackage{algpseudocode}

\numberwithin{equation}{section} 

\newtheorem{theorem}{Theorem}[section]
\newtheorem{corollary}{Corollary}[section]
\newtheorem{lemma}{Lemma}[section]
\newtheorem{proposition}{Proposition}[section]
\newtheorem{definition}{Definition}[section]

\newtheorem{example}{Example}
 
\usepackage{authblk}
\usepackage{tikz}

\newcommand{\xx}{\boldsymbol{x}}
\newcommand{\E}{\mathbb{E}}
\newcommand{\KK}{\boldsymbol{K}}
\newcommand{\PP}{\mathbb{P}}
\newcommand{\ii}{\boldsymbol{i}}
\newcommand{\II}{\boldsymbol{I}}

\newcommand{\HHH}{\mathcal{H}}
\newcommand{\GGG}{\mathcal{G}}
\newcommand{\EEE}{\mathcal{E}}
\newcommand{\ee}{\boldsymbol{e}}
\newcommand{\PPP}{\mathcal{P}}
\newcommand{\pp}{\boldsymbol{p}}
\newcommand{\kk}{\boldsymbol{k}}
\newcommand{\NN}{\mathbb{N}}
\newcommand{\RR}{\mathbb{R}}
\newcommand{\BBB}{\mathcal{B}}
\newcommand{\zz}{\boldsymbol{z}}
\newcommand{\JJ}{\boldsymbol{J}}

\begin{document}

\title{Conway--Maxwell multivariate Bernoulli distribution}
\author[1]{Hélène Cossette}
\author[1]{Etienne Marceau}
\author[2]{Alessandro Mutti}
\author[2]{Patrizia Semeraro}
	\affil[1]{\textit{\'Ecole d'actuariat, Université Laval}}
\affil[2]{\textit{Department of Mathematical Sciences, Politecnico di Torino}}
\maketitle

\begin{abstract}
    
We investigate the Conway--Maxwell multivariate Bernoulli distributions, a family of multivariate Bernoulli distributions derived from the Conway--Maxwell-binomial distribution. 
We show that it is possible to set the parametrization such that the Bernoulli marginals remain intact, allowing us to study dependence properties within this family. 
In particular, we demonstrate that this family spans the full spectrum of dependence. 
Moreover, for specific ranges of the parameters, these distributions satisfy the strongly Rayleigh property, a negative dependence notion stronger than negative association.
\end{abstract}


\section{Introduction}

In this paper, we study a two-parameter class of exchangeable $d$-variate Bernoulli distributions and we demonstrate that this family spans the full spectrum of dependence. Moreover, for specific ranges of the parameters, these distributions satisfy a strong negative dependence property --- namely, the strongly Rayleigh property (SR). Since the SR property implies negative association and can often be verified directly from the probability mass function (pmf), it provides a convenient and powerful tool for establishing negative dependence in this setting. 
We refer to this class as the Conway–Maxwell multivariate Bernoulli distributions, denoted by $\text{CMMB}_d(r, \nu)$, since it is the family of exchangeable Bernoulli distributions underlying the Conway–Maxwell-binomial distribution $\text{CMB}_d(r, \nu)$ introduced in \cite{kadane2013number}. 
Although any discrete distribution with support on $\{0,\ldots, d\}$ can be represented in many different ways as the distribution of the sum of Bernoulli random variables, it admits a unique representation as the sum of exchangeable Bernoulli random variables (\cite{fontana2021model}). 

We first show that when $\nu \in \NN_+=\{1,2,\dots\}$, the distribution $\text{CMMB}_d(r, \nu)$ is SR.
We then prove that in any Fr\'echet class of Bernoulli vectors with equal marginal means $p$, there exists a chain with respect to the supermodular order (\cite{muller2002comparison}), denoted by $\preceq_{sm}$, of $\text{CMMB}_d(r, \nu)$ pmfs, ranging from the minimum in supermodular order to the upper Fréchet bound. We also show that $\nu$ is the parameter that drives dependence and $r$ is constrained by the Fr\'echet class.
For any given $p$ and $\nu$, we can find $r$ following an easy optimization procedure, as shown in Example \ref{ex:chain}. 
Furthermore, in the particular case $p=1/2$, it is easy to see that $r=1/2$.

In Section~\ref{sec:Preliminaries}, we introduce the Conway--Maxwell (CM) families $\text{CMB}_d(r, \nu)$ and $\text{CMMB}_d(r, \nu)$, together with the dependence notions considered in this work: the SR property, the convex order, and the supermodular order. 
For an overview of these dependence concepts and their interrelations, see \cite{borcea2009negative} and \cite{cossette2025extremal}.
The main results are presented in Section~\ref{sec:MainResults}, while some approaches to move beyond exchangeability are explored in Section~\ref{sec:FutureResearch}.

\section{Definitions and preliminaries} \label{sec:Preliminaries}
\subsection{The Conway--Maxwell-Binomial and multivariate Bernoulli distributions}
\label{sec:CMMBdistributions}

Let $\mathcal{D}$ be the class of discrete distributions on $\{0,\ldots, d\}$ and $\mathcal{D}(\mu) \subset \mathcal{D}$ the class of discrete probability mass functions (pmfs) with mean $\mu$. 
Let now $\mathcal{B}_d$ be the class of $d$-dimensional Bernoulli distributions and $\mathcal{B}_d(p)\subset \BBB_d$ the Fr\'echet class of $d$-dimensional Bernoulli pmfs with common mean $p$.
Given a random variable $W$ and a random vector $\II$, the notations $W \in \mathcal{D}(\mu)$ and $\II\in \BBB_d(p)$ means that $W$ and $\II$ have pmfs in $\mathcal{D}(\mu)$ and $\BBB_d(p)$, respectively.
For any $W \in \mathcal{D}(\mu)$, there are many Bernoulli random vectors $\II_W =(I_{W,1},\ldots I_{W,d})\in \BBB_d(\mu/d)$ with $f_W$ as the pmf of $\sum_{j=1}^d I_{W,j}$, but only one of them is exchangeable, and we denote it by $\II^e_W$.
The vector $\II_W^e$ has joint pmf $f^e_W$ in the class $\EEE_d$ of exchangeable Bernoulli pmfs, and in particular in the class $\EEE_d(\mu/d)\subset \mathcal{E}_d$ of exchangeable Bernoulli pmfs with mean $\mu/d$.

The pmf of a discrete random variable $W\in \mathcal{D}$ having the CMB distribution, denoted $W \sim \text{CMB}_d(r,\nu)$, for $r \in (0,1)$ and $\nu \in \mathbb{R}$, is given by
\begin{equation} \label{eq:pmf_CMB}
    f_W(k)=\mathbb{P}(W=k) 
    =
    \frac{\binom{d}{k}^{\nu} r^k (1-r)^{d-k}}{S_d(r,\nu)},
    \quad
    k \in \{0,1,\dots,d\},
\end{equation}
where
\begin{equation*}
    S_d(r,\nu) = \sum_{k=0}^d \binom{d}{k}^{\nu} r^k (1-r)^{d-k}.
\end{equation*}
When $\nu=1$, the binomial distribution results.
As $\nu \to +\infty$, $W$ piles up at $d/2$ if $d$ is even, or at $(d-1)/2$ and $(d+1)/2$ if $d$ is odd, while, as $\nu \to -\infty$, the mass is at $W=0$ and $W=d$.
The expected value of $W$ is given by
\begin{equation*}
    \E[W] = \sum_{k=0}^d k \PP(W=k) = \sum_{k=0}^d \frac{\binom{d}{k}^{\nu} k r^k (1-r)^{d-k}}{S_d(r,\nu)}.
\end{equation*}
Finally, let us introduce the univariate polynomials
\begin{equation} \label{eq:polynomial_G}
    \GGG_{d,\nu}(z) = \sum_{k=0}^d \binom{d}{k}^{\nu} z^k, \quad z \in \mathbb{C}.
\end{equation}
The probability generating function (pgf) of a discrete random variable $W$ is defined as $\PPP_W(z) = \E[z^W] = \sum_{k=0}^d f_W(k) z^k$, for $z \in \mathbb{C}$. 
If $W \sim \text{CMB}_d(r,\nu)$, its pgf has been derived in Section 5 of \cite{kadane2016sums}, and can be expressed in terms of the polynomials in \eqref{eq:polynomial_G} as follows,
\begin{equation*}
    \PPP_W(z) = \frac{(1-r)^{d}}{S_d(r,\nu)} \sum_{k=0}^d \binom{d}{k}^{\nu} \bigg(\frac{r}{1-r} \bigg)^k z^k
    = \frac{(1-r)^d}{S_d(r,\nu)}\GGG_{d,\nu}\left(\frac{r}{1-r}z\right)
    , \quad z \in \mathbb{C}.
\end{equation*}

Let $\II_W^e\in \EEE_d$ be the exchangeable Bernoulli vector underlying $W \sim \text{CMB}_d(r, \nu)$; we name its distribution $\text{CMMB}_d(r, \nu)$ with parameters $r, \nu$ (see also \cite{kadane2016sums}).
For the exchangeable Bernoulli random vectors, the two limit distributions for $\nu \to +\infty$ and $\nu \to -\infty$ respectively represent extreme negative and positive dependence, as we discuss below; see also \cite{frostig2001comparison}.
For a fixed $d$ and given such a parametrization of $r$ in terms of $\nu$ and $d$, $\nu$ is a true dependence parameter.
Obviously, if $\E[W] = \mu$, then $\II_W^e\in \EEE_d(\mu/d)$.
As mentioned in \cite{kadane2016sums}, when we modify either $r$ or $\nu$, the marginals are not preserved.
However, for a fixed $p$, we show that it is possible to find $r=f(p,\nu, d)$ such that $\text{CMMB}_d(r, \nu)$ is a subclass of the Fréchet class $\BBB_d(p)$, making it more attractive for modelling purposes.

Considering the case $r = 1/2$, we obtain
\begin{equation*}
    \E[W] 
    = \sum_{k=0}^d k \PP(W=k) = \sum_{k=0}^d \frac{\binom{d}{k}^{\nu} k (1/2)^{d}}{S_d(1/2,\nu)} 
    = \frac{\sum_{k=0}^d\binom{d}{k}^{\nu} k (1/2)^{d}}{\sum_{k=0}^d \binom{d}{k}^{\nu} (1/2)^{d}}
    = \frac{\sum_{k=0}^d\binom{d}{k}^{\nu} k}{\sum_{k=0}^d \binom{d}{k}^{\nu}},
\end{equation*}
and, using the symmetry of the binomial coefficient, we find
\begin{equation} \label{eq:Expectation1/2}
    2\E[W] 
    = \frac{\sum_{k=0}^d\binom{d}{k}^{\nu} k}{\sum_{k=0}^d \binom{d}{k}^{\nu}} + \frac{\sum_{k=0}^d\binom{d}{k}^{\nu} (d-k)}{\sum_{k=0}^d \binom{d}{k}^{\nu}}
    = \frac{\sum_{k=0}^d\binom{d}{k}^{\nu} d}{\sum_{k=0}^d \binom{d}{k}^{\nu}} = d.
\end{equation}
Therefore, $\E[W] = d/2$ for every $W \sim \text{CMB}_d(1/2,\nu)$, for any $\nu \in \RR$. 
The exchangeable Bernoulli vector $\II_W^e$ behind $W\sim \text{CMB}_d(1/2, \nu)$ has Bernoulli marginals with mean $1/2$, that is $\II^e_W \in \EEE_d(1/2)$; thus, the $\text{CMMB}_d(1/2, \nu)$ distributions belong to the same class $\EEE_d(1/2)$. 
This implies that we can compare the strength of association between two $\text{CMMB}_d(1/2, \nu)$ distributions using a dependence order, as these are partial orders defined on Fr\'echet classes of distributions with the same one-dimensional margins; see \cite{muller2001stochastic}.

\subsection{The strongly Rayleigh property and dependence orders}
\label{sec:SRpropertyAndDependence}

In \cite{borcea2009negative}, the authors introduce the SR property, a strong notion of negative dependence for Bernoulli vectors, defined in terms of their pgfs.
Let $\II \in \BBB_d$, its joint pgf is defined as
\begin{equation*}
    \mathcal{P}_{\II}(\zz) 
    =
    \E[\zz^{\II}]
    =
    \sum_{\ii \in \{0,1\}^d} f_{\II}(\ii)z_1^{i_1} \cdots z_d^{i_d}, 
    \quad \zz = (z_1,\dots,z_d) \in \mathbb{C}^d,
\end{equation*}
where $\zz^{\II} := \prod_{j=1}^d z_j^{I_j}$.
The joint pgf of a Bernoulli random vector is a polynomial in which each variable has degree at most one; such polynomials are called multi-affine.
We first recall a preliminary notion about polynomials, on which the SR property is based.
\begin{definition}\label{def:stab}
    A non-zero polynomial $\mathcal{P} \in \mathbb{C}[z_1,\dots,z_d]$ is stable if $\mathcal{P}(z_1,\dots,z_d) \neq 0$ whenever $\zz = (z_1,\dots,z_d) \in \HHH^d$, where $\HHH = \{ z \in \mathbb{C} : \Im(z)>0 \}$, and where $\Im(z)$ denotes the imaginary part of the complex number $z$. If the coefficients of $\mathcal{P}$ are real, then $\mathcal{P}$ is real stable.
\end{definition}
We are interested in polynomials that are pgfs of $d$-dimensional Bernoulli  random vectors and discrete random variables, i.e., of multi-affine or univariate polynomials with non-negative and real coefficients.  
Theorem 5.6 of \cite{branden2007polynomials} states that a multi-affine polynomial $\PPP(\zz)$ with real coefficients is real stable if and only if, for every $1 \leq j_1 < j_2 \leq d$,
\begin{equation} \label{eq:CondStrongRayleigh}
    \frac{\partial \PPP}{\partial z_{j_1}} (\xx) \frac{\partial \PPP}{\partial z_{j_2}} (\xx) \geq \frac{\partial^2 \PPP}{\partial z_{j_1} \partial z_{j_2}} (\xx) \PPP(\xx), \quad \text{for all } \xx \in \RR^d.
\end{equation}
See \cite{wagner2011multivariate} for more details on stable and multi-affine polynomials.
In the case of univariate polynomials with real coefficients, real stability is equivalent to having all real zeros, a property we will refer to as hyperbolicity throughout this paper.

We are now ready to define the SR property.
\begin{definition}
    A random vector $\II \in \mathcal{B}_d$ or a random variable $W\in \mathcal{D}_d$ satisfies the SR property if its pgf is real stable.
\end{definition}
If a random vector or a random variable satisfies the SR property, we say that its pmf also satisfies the SR property. Note that if W is SR, then its distribution is a Poisson Binomial for some $p_1,\dots,p_d$, see Corollary 4.2 in \cite{tang2022poisson}.

The SR property implies the most famous notion of negative dependence, called negative association. 
Negative association has a natural probabilistic definition.
\begin{definition} \label{def:NA}
    A $d$-dimensional random vector $\II\in \mathcal{B}_d(\pp)$ is said to be negatively associated (NA) if for any two disjoint sets $\Lambda_1, \Lambda_2 \subseteq \{1,\dots,d\}$ and two monotone increasing functions $h_1, h_2$, the following inequality holds
    \begin{equation*}
        \E[h_1(I_j: j \in \Lambda_1)h_2(I_j: j \in \Lambda_2)] \leq \E[h_1(I_j: j \in \Lambda_1)] \E[h_2(I_j: j \in \Lambda_2)],
    \end{equation*}
    provided that the expectations are finite.
\end{definition}
Our investigation focuses on the SR property as it is not only stronger, but often also easier to verify than NA (\cite{borcea2009negative}).
Theorem~3.8 in \cite{borcea2009negative} states that $W$ is SR if and only if the exchangeable Bernoulli vector $\II^e_W$ behind $W$ is SR, and the condition that $W$ is SR  is sometimes more convenient to verify. 

The strength of dependence is measured using partial orders within the Fr\'echet class $\BBB_d(p)$; indeed, only random vectors with the same margins can be compared; see, e.g., Chapter 3 in \cite{muller2002comparison}. 
Here we consider the supermodular order $\preceq_{sm}$ for Bernoulli vectors $\II\in \BBB_d(p)$ that is linked to the convex order for discrete random variables $W\in \mathcal{D}(dp)$, and we need to introduce both.
A supermodular function is a function $\phi \colon \{0,1\}^d \to \RR$ such that $\phi(\ii)+\phi(\kk) \leq \phi(\ii \wedge \kk)+\phi(\ii \vee \kk)$, where $\wedge$ and $\vee$ respectively denotes the componentwise minimum and maximum operators. 
\begin{definition}
    Given two random vectors $\II=(I_1,\dots,I_d)$ and $\boldsymbol{K}=(K_1,\dots,K_d)$ with joint pmfs in the same Fréchet class, $\II$ is said to be smaller than $\KK$ in the supermodular order, denoted by \(\boldsymbol{I} \preceq_{sm} \boldsymbol{K}\), if $\E[\phi(\boldsymbol{I})] \le \E[\phi(\boldsymbol{K})]$, for all supermodular functions $\phi \colon \{0,1\}^d \to \RR$ such that the expectations are finite.
\end{definition}

\begin{definition}
    Given two random variables $X$ and $Y$, we say that $X$ is smaller than $Y$ under the convex order, denoted by $X \preceq_{cx} Y$, if $\E[\varphi(X)] \leq \E[\varphi(Y)]$ for every convex function $\varphi:\RR \to \RR$, for which the expectations are finite.
\end{definition}

Since, as proved in Section 3 of \cite{frostig2001comparison}, $W\preceq_{cx}W'$ implies $I^e_W\preceq_{sm} I^e_{W'}$,  we can first order discrete random variables and then the exchangeable vectors behind them.

\section{The CM families and the SR property}
\label{sec:MainResults}

The authors of \cite{kadane2013number} state that random vectors following a $\text{CMMB}_d(r,\nu)$ distribution exhibit positive pairwise correlation if $\nu < 1$ and negative pairwise correlation if $\nu > 1$, with the special case $\nu=1$ recovering independence. 
As already observed in Section \ref{sec:CMMBdistributions}, the parameter $\nu$ drives the dependence that is, unusually, decreasing in $\nu$.

We want to study whether the CMMB distribution satisfies the SR property for $\nu \geq 1$. The first step is to study if the CMB distribution does.
We first notice that if $\nu = 1$, then $W$ satisfies the SR property as $\PPP_W(z)$ has only real zeros, as it always happens when we consider independent rvs (see \cite{borcea2009negative}).

The next counterexample shows that the CMB distribution is not always SR for $\nu\geq 1$. 

\begin{example} \label{ex:NotSR}
    We consider the case $d=4$, $r=1/2$, and $\nu \geq 1$. 
    The pgf of the random variable $W \sim \text{CMB}_4(1/2,\nu)$ is given by the univariate polynomial $\PPP_W(z) = C (1 + 4^{\nu} z + 6^{\nu} z^2 + 4^{\nu} z^3 + z^4)$, where $C = 1/[2^4 \cdot S_4(1/2, \nu)] \in \RR$. 
    As $\PPP_W(z)$ has strictly positive coefficients, it follows that its value is strictly positive for every non-negative real argument, i.e., no non-negative real number can be a zero of the polynomial.
    Considering $z < 0$, we can write
    \begin{equation*}
        \frac{\PPP_W(z)}{z^2} 
        = 
        C \bigg[ \bigg(z^2 + \frac{1}{z^2} \bigg) + 4^{\nu} \bigg(z + \frac{1}{z}\bigg) + 6^{\nu} \bigg].
    \end{equation*}
    Using the change of variable $y = z+1/z$, we define the function $Q(y,\nu) = y^2 + 4^{\nu}y + (6^{\nu} - 2)$, and we notice that if $y_1 \in \RR$ is such that $Q(y_1,\nu) = 0$, then we find two zeros of $\PPP_W(z)$ by
    \begin{equation*}
        z_{1,j} = \frac{y_1 + (-1)^j\sqrt{y_1^2-4}}{2},
    \end{equation*}
    for $j \in \{1,2\}$.
    If $|y_1| < 2$, it follows that $z_{1,1}$ and $z_{1,2}$ are complex roots.
    Moreover, by Bolzano's theorem, if $Q(2,\nu)Q(-2,\nu) < 0$, then there exists $y_1 \in (-2,2)$ such that $Q(y_1, \nu) = 0$ and, therefore, $\PPP_W(\zz)$ admits complex roots.
    Since $Q(2,\nu) = 2 + 2 \cdot 4^{\nu} + 6^{\nu} > 0$ for every $\nu \in \RR$, we need to study the sign of the continuous function $H(\nu) := Q(-2,\nu) = 2 - 2 \cdot 4^{\nu} + 6^{\nu}$, as $\nu$ varies in $\RR$.
    Since $H(1) = 0$, $H(2) > 0$, and $H'(1)=\log(6^6/2^{16}) < 0$, where $H'(\nu) = \mathrm{d}H(\nu)/\mathrm{d}\nu$, it follows that there exists $\bar{\nu} \in (1,2)$ such that $H(\bar\nu)=0$ and $H(\nu)<0$ for every $\nu \in (1,\bar{\nu})$.
    Therefore, for every $\nu \in (1,\bar{\nu})$, $\PPP_W(z)$ admits complex roots and $W \sim \text{CMB}_4(1/2,\nu)$ does not satisfy the SR property.
    Numerically, one finds $\bar{\nu} \cong {1.14772}$.
\end{example}
To prove our main result on the SR property, that is to give the conditions on the parameters $r$ and $\nu$ for the distribution $\text{CMB}_d(r,\nu)$ to be SR, we need some preliminary propositions.
\begin{proposition} \label{prop:parameters_SRproperty}
    If $W \sim \text{CMB}_d(r,\nu)$ satisfies the SR property, then $W' \sim \text{CMB}_d(r',\nu)$ satisfies the SR, for every choice of $r' \in (0,1)$. 
\end{proposition}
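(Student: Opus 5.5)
The plan is to work directly with the univariate pgf and use the fact, recalled in Section~\ref{sec:SRpropertyAndDependence}, that for a univariate polynomial with real coefficients the SR property is the same as hyperbolicity, i.e. having only real zeros. By the pgf formula derived above,
\begin{equation*}
    \PPP_W(z) = \frac{(1-r)^d}{S_d(r,\nu)}\,\GGG_{d,\nu}\!\left(\frac{r}{1-r}\,z\right),
    \qquad
    \PPP_{W'}(z) = \frac{(1-r')^d}{S_d(r',\nu)}\,\GGG_{d,\nu}\!\left(\frac{r'}{1-r'}\,z\right).
\end{equation*}
Thus both pgfs are positive multiples of the single polynomial $\GGG_{d,\nu}$, evaluated at a positive real rescaling of the argument. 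Here $a = r/(1-r) > 0$ and $a' = r'/(1-r') > 0$ for all $r, r' \in (0,1)$.

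The key step is the zero-correspondence. Since the leading constants are nonzero, $z_0$ is a zero of $\PPP_W$ if and only if $a z_0$ is a zero of $\GGG_{d,\nu}$. So the zero set of $\PPP_W$ is $a^{-1}$ times the zero set of $\GGG_{d,\nu}$, and the zero set of $\PPP_{W'}$ is $(a')^{-1}$ times it. Multiplication by a nonzero real number maps $\RR$ to $\RR$ and $\mathbb{C}\setminus\RR$ to $\mathbb{C}\setminus\RR$. Hence $\PPP_W$ has only real zeros if and only if $\GGG_{d,\nu}$ does, if and only if $\PPP_{W'}$ does. Equivalently, $\PPP_{W'}(z) = c\,\PPP_W\big((a'/a) z\big)$ with $c>0$.

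Alternatively, and closer to Definition~\ref{def:stab}, one can argue that $z\mapsto \lambda z$ with $\lambda>0$ maps $\HHH$ bijectively onto $\HHH$. Therefore $\PPP_W$ is nonvanishing on $\HHH$ if and only if $\PPP_{W'}$ is. Since both polynomials have real coefficients, this gives real stability of $\PPP_{W'}$ directly, without passing through hyperbolicity.

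There is no real obstacle here. The only point needing care is that the scaling factor $a'/a$ is strictly positive (not merely nonzero), so that the upper half-plane is preserved. This holds because $r, r' \in (0,1)$. Also, the degree and the nonzero leading coefficient $\binom{d}{d}^{\nu} a^d$ are unaffected, so no zeros are lost.
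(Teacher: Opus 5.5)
Your proof is correct and follows the paper's argument: the paper also writes $\PPP_{W'}(z)$ as a positive constant times $\PPP_W\bigl(\frac{1-r}{r}\frac{r'}{1-r'}z\bigr)$ and concludes that hyperbolicity carries over. The only difference is that the paper cites a standard stability-preserver lemma for the last step, whereas you check it directly, either by scaling the zero set by a nonzero real or by noting that $z\mapsto\lambda z$ with $\lambda>0$ maps $\HHH$ onto itself; either check is a complete replacement for the citation.
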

\begin{proof}
    The pgfs of $W$ and $W'$ are respectively given by
    \begin{equation*}
        \PPP_W(z)
        = \frac{(1-r)^d}{S_d(r,\nu)}\GGG_{d,\nu}\left(\frac{r}{1-r}z\right)
    \end{equation*}
    and
    \begin{equation*}
        \PPP_{W'}(z) 
        =
        \frac{(1-r')^d}{S_d(r',\nu)}\GGG_{d,\nu}\left(\frac{r'}{1-r'}z\right)
        =
        \bigg( \frac{1-r'}{1-r} \bigg)^d \frac{S_d(r,\nu)}{S_d(r',\nu)} \PPP_W \bigg( \frac{1-r}{r}\frac{r'}{1-r'} z \bigg).
    \end{equation*}
    Since $W$ satisfies the SR property, its pgf $\PPP_W(z)$ is hyperbolic, it is easy to show that $\PPP_{W'}(z)$ is hyperbolic and thus that $W'$ satisfies the SR property; see the stability preservers in Lemma 2.4 in \cite{wagner2011multivariate}.
\end{proof}

A consequence of Proposition \ref{prop:parameters_SRproperty} is that the SR property is independent of the parameter $r$.
Hence, the conclusions in Example \ref{ex:NotSR} can be extended to any $r \in (0,1)$.
Moreover, we can prove our results on the SR property for $r=1/2$ without loss of generality.
This choice simplifies the proofs, thanks to the following corollary. 
\begin{corollary} \label{cor:PolynomialSR}
    The random variable $W \sim \text{CMB}_d(r,\nu)$ with $r \in (0,1)$ and $\nu \in \mathbb{R}$ satisfies the SR property if and only if the polynomial $\GGG_{d,\nu}(z)$ in \eqref{eq:polynomial_G} is hyperbolic.
\end{corollary}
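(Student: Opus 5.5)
The plan is to reduce the statement to the case $r=1/2$ using Proposition~\ref{prop:parameters_SRproperty}, and then read off the pgf directly.

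First, fix $\nu \in \RR$. By Proposition~\ref{prop:parameters_SRproperty}, whether $\text{CMB}_d(r,\nu)$ is SR does not depend on $r \in (0,1)$. Both directions of the equivalence can therefore be proved for $r=1/2$ and then transferred to arbitrary $r$ by applying the proposition with $r'=r$, or with the roles of $r$ and $r'$ swapped. For $r=1/2$ the factor $r/(1-r)$ equals $1$, so the pgf is
\begin{equation*}
    \PPP_W(z) = \frac{2^{-d}}{S_d(1/2,\nu)}\,\GGG_{d,\nu}(z),
\end{equation*}
a strictly positive constant multiple of $\GGG_{d,\nu}$.

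Second, recall how SR is defined for a univariate pgf. $W$ is SR if and only if $\PPP_W$ is real stable. For univariate polynomials with real coefficients, real stability is equivalent to having only real zeros, which is what the paper calls hyperbolicity. Multiplying by a nonzero constant does not change the zero set, so $\PPP_W$ is hyperbolic exactly when $\GGG_{d,\nu}$ is. The degree is also unchanged: the leading coefficient $\binom{d}{d}^\nu=1$ is nonzero, so $\GGG_{d,\nu}$ has degree exactly $d$. This proves the claim for $r=1/2$ and hence, by the first step, for every $r\in(0,1)$.

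The argument could also avoid the reduction and go straight through the rescaling $z\mapsto \frac{r}{1-r}z$. The scale factor is a positive real number, and its inverse $\frac{1-r}{r}$ maps the zeros of $\GGG_{d,\nu}$ bijectively onto the zeros of $\PPP_W$. Real zeros therefore correspond to real zeros, which gives hyperbolicity in both directions. This is the stability-preserver fact already invoked from Lemma 2.4 of \cite{wagner2011multivariate}.

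I do not expect a real obstacle. The only point needing care is that ``hyperbolic'' must be read as ``all zeros real'' for a polynomial of fixed degree $d$, and this is preserved under positive scaling of both the variable and the polynomial.
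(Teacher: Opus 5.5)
Your proposal is correct and follows the paper's proof essentially verbatim. You reduce to $r=1/2$ via Proposition~\ref{prop:parameters_SRproperty}; there the pgf is a positive constant multiple of $\GGG_{d,\nu}(z)$, so the two are hyperbolic together, and your direct argument via the positive rescaling $z\mapsto \frac{r}{1-r}z$ is simply the content of that proposition unpacked.
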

\begin{proof}
    By Proposition~\ref{prop:parameters_SRproperty} the SR property is independent of $r$, therefore the random variable $W$ satisfies the SR property if and only if $W' \sim \text{CMB}_d(1/2,\nu)$ satisfies the SR property.
    The pgf of $W'$ is given by
    \begin{equation*}
        \PPP_{W'}(z) = \frac{(1/2)^{d}}{S_d(1/2,\nu)} \GGG_{d,\nu}(z),
    \end{equation*}
    that is hyperbolic if and only if the polynomial $\GGG_{d,\nu}(z)$ in \eqref{eq:polynomial_G} is hyperbolic.
\end{proof}
The proof of our main result is based on the following lemma.
\begin{lemma} \label{prop:nu_plus1}
    If $W \sim \text{CMB}_d(r,\nu)$ with $r \in (0,1)$ and $\nu \in \mathbb{R}$ satisfies the SR property, then $W' \sim \text{CMB}_d(r,\nu+1)$ satisfies the SR property.
\end{lemma}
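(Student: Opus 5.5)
By Corollary~\ref{cor:PolynomialSR}, it suffices to show that if $\GGG_{d,\nu}(z)=\sum_{k=0}^d \binom{d}{k}^{\nu} z^k$ is hyperbolic, then so is $\GGG_{d,\nu+1}(z)=\sum_{k=0}^d \binom{d}{k}^{\nu}\binom{d}{k} z^k$. The coefficients of $\GGG_{d,\nu+1}$ are those of $\GGG_{d,\nu}$ multiplied termwise by $\binom{d}{k}$, which are the coefficients of $(1+z)^d$. This is exactly the situation of the classical Schur--Szeg\H{o} (Hadamard product) composition theorem. The plan is to invoke that theorem, or equivalently a P\'olya--Schur multiplier-sequence argument, with the hyperbolic polynomial $(1+z)^d$ as the second factor.

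Concretely, I would use the following form of Schur's theorem. Suppose $f(z)=\sum_{k=0}^d a_k z^k$ has only real zeros, and $g(z)=\sum_{k=0}^d b_k z^k$ has only real zeros, all of the same sign. Then $\sum_{k=0}^d k!\,a_k b_k z^k$ has only real zeros. A cleaner variant, due to Schur and Mal\'o, states that if $f$ has only real zeros and $g$ has only real zeros of the same sign, then the Hadamard product $\sum_k a_k b_k z^k$ has only real zeros. Applying the Mal\'o--Schur version with $f=\GGG_{d,\nu}$ and $g=(1+z)^d$ gives the result immediately, since $g$ has all its zeros at $-1$.

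To keep the argument self-contained with respect to the paper's cited tools, I would instead present it through the P\'olya--Schur characterization. A sequence $(\gamma_k)_{k\ge0}$ is a multiplier sequence (it preserves real-rootedness under $\sum a_k z^k \mapsto \sum \gamma_k a_k z^k$) if and only if its generating function $\sum_k \gamma_k z^k/k!$ is, up to sign, in the Laguerre--P\'olya class with nonnegative coefficients. For the finite sequence $\gamma_k=\binom{d}{k}$ (with $\gamma_k=0$ for $k>d$), one has $\sum_k \gamma_k z^k/k!=\sum_{k=0}^d \binom{d}{k} z^k/k!$, a Laguerre polynomial $L_d(-z)$. Its zeros are real and, after $z\mapsto -z$, all of one sign, so $\binom{d}{k}$ is a (finite) multiplier sequence. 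Applying it to $\GGG_{d,\nu}$ yields $\GGG_{d,\nu+1}$, which is therefore hyperbolic.

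The main obstacle is purely citational: choosing the correct version of the Schur/Mal\'o theorem or the P\'olya--Schur criterion, and checking that its hypotheses match. In particular, one must confirm the degree constraints and that the zeros of $(1+z)^d$, or of the Laguerre polynomial, are of one sign. A secondary point is positivity of the coefficients: the positivity of $\binom{d}{k}^{\nu}$ guarantees that $\GGG_{d,\nu+1}$ has full degree $d$ and nonzero constant term, so no degenerate cases arise. Finally, Corollary~\ref{cor:PolynomialSR} transfers hyperbolicity back to the SR property of $W'$ for the given $r$.
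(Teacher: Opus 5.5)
Your proof is correct, but it follows a different route from the paper's. Both arguments use Corollary~\ref{cor:PolynomialSR} to reduce the claim to showing that the diagonal operator $T[z^k]=\binom{d}{k}z^k$ (with $T[z^k]=0$ for $k>d$) preserves hyperbolicity. Equivalently, one must show that $\{\binom{d}{k}\}_{k\ge 0}$ is a multiplier sequence.

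The paper verifies the \emph{algebraic} P\'olya--Schur criterion. It checks that $T[(1+z)^n]=\sum_k\binom{n}{k}\binom{d}{k}z^k$ has only real, negative zeros for every $n\ge 0$. This forces a case split:
\begin{itemize}
\item for $n<d$ and $n>d$, it uses the Driver--Jordaan result on the hypergeometric polynomials $F(-n,-d;1;z)$;
\item for $n=d$, it uses the Legendre identity $\sum_k\binom{d}{k}^2z^k=(1-z)^d\mathcal{L}_d\bigl((1+z)/(1-z)\bigr)$.
\end{itemize}

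You instead use the \emph{transcendental} form of the criterion. This needs only one polynomial, $\sum_k\binom{d}{k}z^k/k!=L_d(-z)$, whose zeros are the negatives of the real, positive zeros of the Laguerre polynomial $L_d$. It therefore has nonnegative coefficients and negative zeros, which is exactly the required Laguerre--P\'olya condition. Your other option is more direct still: the Mal\'o--Schur Hadamard-product theorem with $g=(1+z)^d$ needs no special-function input at all, only that $(1+z)^d$ has all its zeros at $-1$. You correctly pick the Mal\'o form $\sum_k a_kb_kz^k$ rather than Schur's $\sum_k k!\,a_kb_kz^k$.

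Your route is shorter and rests on a single classical fact instead of two case-specific ones. The paper's route gives, as a by-product, explicit real-rootedness of the polynomials $\sum_k\binom{n}{k}\binom{d}{k}z^k$. These, however, also follow from your argument by applying the multiplier sequence to $(1+z)^n$.
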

\begin{proof}
    Using Corollary~\ref{cor:PolynomialSR}, the statement reduces to show that if $\GGG_{d,\nu}(z)$ is hyperbolic, then $\GGG_{d,\nu+1}(z)$ is hyperbolic.
    Observe that $\GGG_{d,\nu+1}(z) = T[\GGG_{d,\nu}(z)]$, where T is the linear operator on $\mathbb{C}[z]$ defined by $T[z^k] = \lambda(k) z^k$, with $\lambda \colon \mathbb{N} \to \mathbb{R}$ given by
    \begin{equation*}
        \lambda(k) =
        \begin{cases}
            \binom{d}{k}, &\text{if } 0 \leq k \leq d,
            \\
            0, &\text{if } k > d.
        \end{cases}
    \end{equation*}

   It is therefore sufficient to prove that the operator $T$ preserves hyperbolicity.
    In \cite{schur1914zwei} (see also Theorem 1.7 in \cite{borcea2010multivariate}), P\'olya and Schur show that $T$ is a hyperbolicity preserver if and only if $T[(1+z)^n]$ is hyperbolic with all zeros of the same sign, for all nonnegative integers $n$.
    We have
    \begin{equation*}
        T[(1+z)^n] = T \bigg[ \sum_{k=0}^n \binom{n}{k} z^k \bigg] = \sum_{k=0}^n \binom{n}{k} T[z^k] = \sum_{k=0}^{\min\{n,d\}} \binom{n}{k} \binom{d}{k} z^k.
    \end{equation*}
    Consider $n \in \{0,\dots,d-1\}$.
    Theorem 3.2$(v)$ in \cite{driver2008zeros} states that the hypergeometric polynomial, defined by
    \begin{equation*}
        F(-n,b;c;z) = \sum_{k=0}^{\infty} \frac{(-n)_k (b)_k}{(c)_k} \frac{x^k}{k!},
    \end{equation*}
    with $n \in \mathbb{N}$, $b,c \in \mathbb{R}$, $c>0$, and $b<-n$, is hyperbolic with all negative zeros; we write $(a)_k$ to denote the Pochhammer's symbol, i.e., $(a)_k = a(a+1) \cdots (a+k-1)$ for $k \in \mathbb{N}$, and $(a)_0 = 1$, for every $a \in \mathbb{C}$.
    By setting $b = -d$ and $c = 1$, we have
    \begin{equation*}
    \begin{split}
        F(-n,-d;1;z) 
        &= 
        \sum_{k=0}^{\infty} \frac{(-n)_k (-d)_k}{(1)_k} \frac{x^k}{k!}
        \\
        &= 
        \sum_{k=0}^{n} \frac{(-n)(-n+1) \cdots (-n+k-1) (-d)(-d+1) \cdots (-d+k-1)}{k!} \frac{x^k}{k!}
        \\
        &= 
        \sum_{k=0}^n (-1)^{2k} \frac{n!}{k!(n-k)!} \frac{d!}{k!(d-k)!} x^k = \sum_{k=0}^{n} \binom{n}{k} \binom{d}{k} x^k = T[(1+z)^n].
    \end{split}
    \end{equation*}
    Therefore, for $n \in \{0,\dots,d-1\}$, the polynomial $T[(1+z)^n]$ is hyperbolic with all negative zeros.
    When $n \in \{d+1,d+2,\dots\}$, by applying again Theorem 3.2$(v)$ of \cite{driver2008zeros}, we have that $T[(1+z)^n] = F(-d,-n;1;z)$ is hyperbolic with all negative zeros.
    It remains to show that $T[(1+n)^d]$ is hyperbolic with all negative zeros.
    For every $z \neq 1$, we can write
    \begin{equation} \label{eq:Relation_Legendre}
        T[(1+z)^d] = \sum_{k=0}^d \binom{d}{k}^{2} z^k = (1-z)^d \mathcal{L}_d \bigg( \frac{1+z}{1-z} \bigg),
    \end{equation}
    where $\mathcal{L}_d(x)$ is the Legendre polynomial of order $d$.
    Indeed, from \cite[p.162]{rainville1960special}, the Legendre polynomial $\mathcal{L}_d(x)$ can be written as
    \begin{equation*}
        \mathcal{L}_d(x) = \sum_{k=0}^d \binom{d}{k}^2 \bigg( \frac{x-1}{2} \bigg)^{d-k} \bigg( \frac{x-1}{2} \bigg)^k,
    \end{equation*}
        and, after standard computations, \eqref{eq:Relation_Legendre} follows.
    It is well-known that $\mathcal{L}_d(x)$ has $d$ real and distinct zeros, all of which lie in the interval $(-1,1)$; see, e.g., \cite{turan1950zeros}.
    We denote the zeros of $\mathcal{L}_d(x)$ by $x_1, \dots, x_d$.
    Therefore, the zeros of $T[(1+z)^d]$ are given by
    \begin{equation*}
        z_j = \frac{x_j-1}{x_j+1},
    \end{equation*}
    for $j \in \{1,\dots,d\}$, and we can conclude that $z_1, \dots, z_d$ are real, distinct, and lies in $(-\infty, 0)$.
    Thus, $T[(1+z)^d]$ is hyperbolic with all negative zeros, and this completes the proof.
\end{proof}

We can now state our main result about the SR property.
\begin{theorem}
    The random variable $W \sim \text{CMB}_d(r,\nu)$ with $r \in (0,1)$ and $\nu \in \NN_+$ satisfies the SR property. The random vector $\II^e_W\sim \text{CMMB}_d(r, \nu)$ behind satisfies the SR property.
\end{theorem}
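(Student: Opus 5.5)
The plan is an induction on $\nu \in \NN_+$, with Lemma~\ref{prop:nu_plus1} as the inductive step. The theorem then transfers from the sum $W$ to the exchangeable vector $\II^e_W$ through Theorem~3.8 of \cite{borcea2009negative}.

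\emph{Base case.} For $\nu = 1$, $W \sim \text{CMB}_d(r,1)$ is binomial with parameters $d$ and $r$. Its pgf is $\PPP_W(z) = (1-r+rz)^d$, which has the single real zero $-(1-r)/r$ with multiplicity $d$. So it is hyperbolic and $W$ is SR, as already observed before Example~\ref{ex:NotSR}. Equivalently, via Corollary~\ref{cor:PolynomialSR}, $\GGG_{d,1}(z) = (1+z)^d$ is hyperbolic.

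\emph{Inductive step.} Suppose $W \sim \text{CMB}_d(r,\nu)$ is SR for some $\nu \in \NN_+$. Lemma~\ref{prop:nu_plus1} then gives that $\text{CMB}_d(r,\nu+1)$ is SR for the same $r$. By induction, $\text{CMB}_d(r,\nu)$ is SR for every $\nu \in \NN_+$. Proposition~\ref{prop:parameters_SRproperty} (or Corollary~\ref{cor:PolynomialSR}) shows that this conclusion does not depend on the choice of $r \in (0,1)$. In polynomial terms, $\GGG_{d,\nu}$ is obtained from $(1+z)^d$ by applying the hyperbolicity-preserving operator $T$ a total of $\nu-1$ times.

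\emph{Vector part.} By Theorem~3.8 of \cite{borcea2009negative}, an integer random variable $W$ on $\{0,\dots,d\}$ is SR if and only if the exchangeable Bernoulli vector $\II^e_W$ with $\sum_j I^e_{W,j} = W$ is SR. This uses the symmetrization fact that the pgf of $\II^e_W$ is $\sum_k f_W(k)\, e_k(\zz)/\binom{d}{k}$, where $e_k$ is the $k$-th elementary symmetric polynomial; this polynomial is real stable exactly when the univariate generating polynomial has only real zeros. Applying it to $W \sim \text{CMB}_d(r,\nu)$ shows that $\II^e_W \sim \text{CMMB}_d(r,\nu)$ is SR.

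There is essentially no obstacle here, because the heavy lifting (the P\'olya--Schur multiplier argument) sits in Lemma~\ref{prop:nu_plus1}. The only points needing care are that the base case is genuinely SR and that the cited equivalence between $W$ and $\II^e_W$ applies to the specific exchangeable vector $\II^e_W$ behind $W$.
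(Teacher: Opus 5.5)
Your proposal is correct and follows the paper's proof: induction on $\nu$ starting from the binomial case $\nu=1$, with Lemma~\ref{prop:nu_plus1} as the inductive step. You then pass from $W$ to $\II^e_W$ via Theorem~3.8 of \cite{borcea2009negative}, exactly as the paper does.
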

\begin{proof}
    Using Proposition \ref{prop:nu_plus1}, the claim follows by induction since the random variable $W' \sim \text{CMB}_d(r,1)$, with $r \in (0,1)$, satisfies the SR property. 
    The exchangeable vector behind any SR random variable satisfies the SR property by Theorem 3.8 in \cite{borcea2009negative}.
\end{proof}

In the following proposition, we  provide conditions to construct a chain with respect to the convex order in $\mathcal{D}(\mu)$, that is the first step to build a chain with respect to the supermodular order in $\BBB_d(\mu/d)$.  

\begin{proposition} \label{prop:CXorder}
Let $W_1 \sim \text{CMB}_d(r_1,\nu_1)$ and $W_2 \sim \text{CMB}_d(r_2,\nu_2)$ such that $\E[W_1] = \E[W_2]=\mu$ and $\nu_1 \leq \nu_2$.
    Then, $W_2 \preceq_{cx} W_1$ holds.
\end{proposition}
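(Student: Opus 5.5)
We plan to use the classical sign-change (cut) criterion for the convex order of discrete random variables, the Karlin--Novikoff criterion. If $\E[W_1]=\E[W_2]$ and the difference of pmfs $f_{W_1}(k)-f_{W_2}(k)$ changes sign exactly twice on $\{0,\dots,d\}$, with sign pattern $+,-,+$, then $W_2 \preceq_{cx} W_1$. This holds because the difference of the stop-loss transforms $\E[(W_1-t)_+]-\E[(W_2-t)_+]$ vanishes at both ends (equal means). By the sign pattern it is first nondecreasing and then nonincreasing, hence nonnegative everywhere. If $\nu_1=\nu_2$, equal means force $r_1=r_2$, since the mean is strictly increasing in $r$; the CMB family is an exponential family in $\theta=\log(r/(1-r))$, so the derivative of the mean in $\theta$ equals the variance, which is positive. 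Then $W_1 \stackrel{d}{=} W_2$ and there is nothing to prove. So we assume $\nu_1<\nu_2$.

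The key step is to study the likelihood ratio
\begin{equation*}
\frac{f_{W_1}(k)}{f_{W_2}(k)} = C \binom{d}{k}^{\nu_1-\nu_2} \left(\frac{r_1(1-r_2)}{r_2(1-r_1)}\right)^k ,
\end{equation*}
with $C>0$ a normalising constant. Its logarithm is
\begin{equation*}
g(k)=\log C-(\nu_2-\nu_1)\log\binom{d}{k}+ck ,
\end{equation*}
where $c$ is a real constant. Since $k\mapsto \log\binom{d}{k}=\log d!-\log k!-\log (d-k)!$ is strictly concave on $\{0,\dots,d\}$, and $\nu_2-\nu_1>0$, $g$ is strictly convex in $k$. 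Concavity holds because its second difference is $\log\frac{(k+1)(d-k+1)}{k(d-k)}\cdot(-1)<0$; checking this is routine. A strictly convex sequence crosses any level, in particular $0$, at most twice, with $g-0$ positive outside and negative between the crossings. Hence $f_{W_1}-f_{W_2}$ has at most two sign changes, and if there are two the pattern is $+,-,+$.

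It remains to exclude fewer than two sign changes. Both pmfs sum to one, so the difference must change sign at least once unless the pmfs coincide. If there were exactly one sign change, the difference would be $-$ then $+$ or $+$ then $-$. In that case one distribution would dominate the other in the monotone likelihood-ratio sense, hence stochastically, and strictly unless the two distributions are equal. This contradicts $\E[W_1]=\E[W_2]$. The pmfs cannot coincide, since $g$ is strictly convex and hence not constant. So there are exactly two sign changes with pattern $+,-,+$, and the cut criterion (e.g.\ Theorem 3.A.44 in Shaked and Shanthikumar, or the corresponding result in \cite{muller2002comparison}) yields $W_2\preceq_{cx}W_1$.

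The main obstacle is handling the discrete sign-change bookkeeping carefully. Zeros of the difference at lattice points must be treated as not constituting extra sign changes, which is fine because convexity of $g$ still allows at most two sign changes in the weak sense. We must also make sure the cited cut criterion is stated for lattice distributions with equal means. If citing it is undesirable, we would instead verify directly that the stop-loss difference $\sum_{k>t}(k-t)\,(f_{W_1}(k)-f_{W_2}(k))$ is nonnegative, using the unimodal behaviour of its discrete derivative.
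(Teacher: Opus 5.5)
Your proposal is correct and follows essentially the same route as the paper. Both arguments use convexity of $\ln(f_{W_1}/f_{W_2})$ to get at most two sign changes of $f_{W_1}-f_{W_2}$ with pattern $+,-,+$, use equal means to force at least two, and conclude with Theorem 3.A.44 of Shaked--Shanthikumar; the paper gets ``at least two'' from the crossing of the cdfs, which is the same content as your exclusion of a single sign change.

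One small correction: a single sign change of $f_{W_1}-f_{W_2}$ gives strict stochastic dominance directly, because $F_{W_1}-F_{W_2}$ is then monotone up to the crossing and back to $0$. It does not give likelihood ratio dominance, since a convex log-ratio with one sign change need not be monotone. Your separate treatment of $\nu_1=\nu_2$, where equal means force $r_1=r_2$, is a point of care the paper's argument skips.
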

\begin{proof}
We denote by $f_1$ and $f_2$ the pmfs and by $F_1$ and $F_2$ the cdfs of $W_1$ and $W_2$, respectively.
Let also denote $g:=f_1-f_2$ and $G := F_1-F_2$.

Since $\E[W_1] = \E[W_2]$, the cdfs $F_1$ and $F_2$ cross at least once (see the proof of Theorem 3.A.5 in \cite{shaked2007}), and $G$ changes sign at least once. 
Since $G(n) = \sum_{k=0}^n g(k)$ and $G(d)=0$, if $G$ changes sign at least once, then $g$ changes sign at least twice. If we prove that $g$ changes sign exactly twice, with sign sequence $+,-,+$, by Theorem 3.A.44 in \cite{shaked2007}, we have $W_2\preceq_{cx}W_1$. The sign changes of $g$ coincide with those of $\ell:= \ln(f_1/f_2)$. Indeed,
\begin{equation*}
    \ell(k)>0 
    \iff
    \ln\frac{f_1(k)}{f_2(k)}>0 
    \iff 
    \frac{f_1(k)}{f_2(k)}>1 
    \iff
    f_1(k)>f_2(k) 
    \iff
    g(k)>0.
\end{equation*}
Moreover, $\ell(k)$ is a discrete convex function, i.e., $\ell(k+1) + \ell(k-1) - 2\ell(k) \geq 0$.
Indeed, from \eqref{eq:pmf_CMB}, for any $k \in \{0,1,\dots,d\}$, we have
    \begin{equation*}
    \frac{f_1(k)}{f_2(k)} 
    =
    \left(\frac{1-r_1}{1-r_2}\right)^d\frac{S_d(r_2,\nu_2)}{S_d(r_1,\nu_1)}\binom d k^{\nu_1-\nu_2}\left(\frac{r_1(1-r_2)}{r_2(1-r_1)}\right)^k,
    \end{equation*}
thus
\begin{equation*}
    \ell(k)
    =d\ln
    \left(\frac{1-r_1}{1-r_2}\right)+\ln\left(\frac{S_d(r_2,\nu_2)}{S_d(r_1,\nu_1)}\right)+{(\nu_1-\nu_2)}\ln\binom d k+k\ln\left(\frac{r_1(1-r_2)}{r_2(1-r_1)}\right),
    \end{equation*}
and
\begin{align*}
    \ell(k+1)+\ell(k-1)-2\ell(k)&=(\nu_1-\nu_2) \mathrm{ln}\left( \binom{d}{k+1} \binom{d}{k-1} \binom{d}{k}^{-2} \right) \\
    &=(\nu_1-\nu_2)\mathrm{ln}\left( \frac{k}{k+1}\frac{d-k}{d-k+1}\right) 
    \geq 0
\end{align*}
for any $k$.
Therefore, since $\ell$ is a convex function, it changes sign at most twice with sign sequence $+,-,+$, and so does $g$. We can conclude that $g$ changes sign exactly twice, with sign sequence $+,-,+$, and $W_2\preceq_{cx}W_1$.
\end{proof}

\begin{corollary}\label{cor:sm}
    Let $\II_1^e$ and $\II_2^e$ be two exchangeable Bernoulli random vectors such that
    \begin{equation*}
        \sum_{j=1}^d I_{1,j}^e  \overset{\mathcal{L}}{=} W_1 
        \quad \text{and} \quad
        \sum_{j=1}^d I_{2,j}^e  \overset{\mathcal{L}}{=} W_2,
    \end{equation*}
    where $W_1 \sim \text{CMB}_d(r_1,\nu_1)$ and $W_2 \sim \text{CMB}_d(r_2,\nu_2)$, with $\E[W_1] = \E[W_2] = \mu$ and $\nu_1 \leq \nu_2$, and $\overset{\mathcal{L}}{=}$ means equality in distribution.
    Then, $\II_1^e, \II_2^e\in \BBB_d(p)$, with $p=\mu/d$, and $\II_2^e \preceq_{sm} \II_1^e$.
\end{corollary}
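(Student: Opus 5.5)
The plan is to combine Proposition~\ref{prop:CXorder} with the result of \cite{frostig2001comparison} recalled at the end of Section~\ref{sec:SRpropertyAndDependence}. Before doing so, we must check that both exchangeable vectors lie in the same Fréchet class, since the supermodular order is only defined there.

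\emph{Step 1: the marginals.} Fix $i\in\{1,2\}$. By exchangeability, all components $I^e_{i,j}$ have the same Bernoulli distribution with some mean $p_i$. Linearity of expectation and the equality in distribution give
\[
d\,p_i=\sum_{j=1}^d \E[I^e_{i,j}]=\E[W_i]=\mu,
\]
so $p_i=\mu/d=p$ for $i=1,2$. Hence $\II^e_1,\II^e_2\in\EEE_d(p)\subset\BBB_d(p)$, and in particular they have identical one-dimensional margins.

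\emph{Step 2: the convex order.} The hypotheses $\E[W_1]=\E[W_2]=\mu$ and $\nu_1\le\nu_2$ are exactly those of Proposition~\ref{prop:CXorder}. Therefore $W_2\preceq_{cx}W_1$.

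\emph{Step 3: transfer to the supermodular order.} By Section~3 of \cite{frostig2001comparison}, if $W\preceq_{cx}W'$ then the exchangeable Bernoulli vectors behind them satisfy $\II^e_W\preceq_{sm}\II^e_{W'}$. Uniqueness of the exchangeable representation (\cite{fontana2021model}) gives $\II^e_1\overset{\mathcal{L}}{=}\II^e_{W_1}$ and $\II^e_2\overset{\mathcal{L}}{=}\II^e_{W_2}$. Applying the cited result with $W=W_2$ and $W'=W_1$ then yields $\II^e_2\preceq_{sm}\II^e_1$.

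The only delicate point is Step~3: the transfer from $\preceq_{cx}$ to $\preceq_{sm}$ relies on both vectors being the \emph{exchangeable} representatives. Uniqueness is what guarantees this, because it identifies the given $\II^e_i$ with the objects in the cited theorem. If one wanted a self-contained argument, one could write $\E[\phi(\II^e)]$ for a supermodular $\phi$ as $\E[\bar\phi(W)]$, where
\[
\bar\phi(k)=\binom{d}{k}^{-1}\sum_{|\ii|=k}\phi(\ii),
\]
and then check that supermodularity of $\phi$ makes $\bar\phi$ discretely convex. Here we simply invoke the cited result.
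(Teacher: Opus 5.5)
Your proof is correct and follows essentially the same route as the paper: Proposition~\ref{prop:CXorder} gives $W_2\preceq_{cx}W_1$, and the result of \cite{frostig2001comparison} transfers this to $\II_2^e\preceq_{sm}\II_1^e$. Your Step~1 (marginal means equal $\mu/d$ by exchangeability) and the uniqueness remark make explicit what the paper leaves implicit, and your sketched self-contained argument through the symmetrized function $\bar\phi$ is also sound.
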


\begin{proof}
    By Proposition \ref{prop:CXorder}, we have $W_2 \preceq_{cx} W_1$, which implies $\II_2^e \preceq_{sm} \II_1^e$; see Section~3 of \cite{frostig2001comparison}.
\end{proof}
The above corollary proves that, for any given $\mu \in (0,d)$, there is a chain with respect to $\preceq_{sm}$, included in the Fréchet class $\BBB_d(p)$, with $p=\mu/d$, from the minimum under the supermodular order to the upper Fr\'echet bound.

\begin{example}\label{ex:chain} 
For a fixed triple $(d,\nu,p)$, the value $r$ is obtained such that $\E[W] = dp$ by optimization. For example, assume that $p = 1/3$ and $d = 9$. 
For $\nu \in \{2,3,4,5\}$, the corresponding values of $r$ and $\text{Var}(W)$ are $\{0.21367747, 0.12810820,  0.07352747, 0.04105203\}$ and  $\{1.0748125, $$0.7319828,$$ 0.5547126,$\newline$0.4452527\}$, respectively. 
The values of $\text{Var}(W)$ are decreasing in $\nu$ as a consequence of Corollary~\ref{cor:sm}. 
When $\nu = 1$ (independence), $r = 1/3$ and $\text{Var}(W) = 2$. 
Figure~\ref{fig:ExamplePmfW} shows the (approximated) values of the pmfs, which, as expected, are more concentrated around the mean as $\nu$ increases.
\end{example}   
\begin{figure}[tb]
    \centering
    \includegraphics[width=0.45\linewidth]{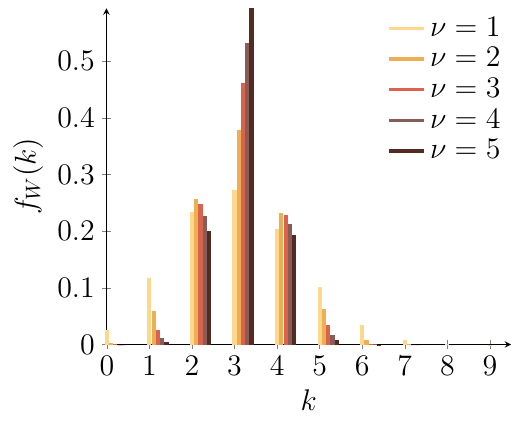}
    \caption{Histogram of the values of pmfs from Example~\ref{ex:chain}. }
    \label{fig:ExamplePmfW}
\end{figure}

Since we do not have an analytical expression for $r$ as a function of $p$, $r$, and $\nu$, we cannot explicitly find the chain of pmfs unless we choose $r=1/2$.
For this reason, we now consider the CM families with $r=1/2$. Indeed, as discussed at the end of Section~\ref{sec:CMMBdistributions}, if $r=1/2$ the exchangeable Bernoulli vector $\II^e_W$ behind $W\sim \text{CMB}_d(1/2, \nu)$ belongs to $\EEE_d(1/2)$.
The above corollary proves that $\text{CMMB}(1/2, \nu)$ is a chain with respect to the supermodular order in $\mathcal{B}_d(1/2)$. 
As $\nu\rightarrow +\infty$, the $\text{CMB}$ distribution converges to a distribution with support on $\{(d-1)/2, (d+1)/2\}$ if $d$ is odd, and on $\{d/2\}$ if $d$ is even.
Conversely, as $\nu\rightarrow -\infty$, it converges to a distribution with support on $\{0,d\}$. 
Consequently, the $\text{CMMB}(1/2, \nu)$  converges to the minimum under the supermodular order as $\nu\rightarrow +\infty$ and to the upper Fr\'echet bound as $\nu\rightarrow -\infty$ (see \cite{frostig2001comparison}). 
Therefore, $\text{CMMB}_d(1/2, \nu)$ is a well-ordered chain with respect to the supermodular order in $\mathcal{B}_d(1/2)$. 
This means that the family is able to model each level of dependence with only one parameter.
Furthermore, the subfamily $\text{CMMB}_d(1/2, n)$, for $n \in \NN_+$, is a chain under the supermodular order of pmfs that satisfy the SR property.

\section{Extension and further research}
\label{sec:FutureResearch}

This section presents two ways to move from the $\text{CMMB}_d(1/2, \nu)$ distribution to obtain a $d$-dimensional SR and non-exchangeable Bernoulli vector. 
The first approach relies on a geometrical representation of the Bernoulli pmfs and allows us to include in the family the limit cases $\nu\to \pm \infty$. 
The second approach is based on a stochastic representation and allows us to choose the new vector belonging to a given  Fr\'echet class, i.e., with given marginal means.
\subsection{Geometrical approach}
Consider a random variable $W \sim \text{CMB}_d(1/2,\nu)$, with $\nu \in \RR$. Since its expected value is $d/2$, we have $W \in \mathcal{D}(d/2)$. The authors of \cite{fontana2021model} show that $\mathcal{D}(\mu)$, for $\mu \in (0,d)$, is a convex polytope with extremal points
\begin{equation*}
    r^D_{j_1,j_2}(k) =
    \begin{cases}
        \frac{j_2 - \mu}{j_2 - j_1}, &\text{for } k=j_1,
        \\
        \frac{\mu - j_1}{j_2 - j_1}, &\text{for } k=j_2,
        \\
        0, &\text{otherwise},
    \end{cases}
\end{equation*}
for any $j_1 \in \{0,\dots,\lfloor \mu \rfloor\}$ and $j_2 \in \{\lceil \mu \rceil, \dots, d \}$, where $\lfloor \mu \rfloor$ is the highest integer lower than or equal to $\mu$ and $\lceil \mu \rceil$ is the lowest integer higher than $\mu$.
Therefore, we have 
\begin{equation*}
    f_W = \sum_{j_1=0}^{\lfloor d/2 \rfloor} \sum_{j_2 = \lceil d/2 \rceil}^d \lambda_{j_1,j_2} r^D_{j_1,j_2},
\end{equation*}
where $\lambda_{j_1,j_2}$, for $j_1 \in \{0,\dots,\lfloor \mu \rfloor\}$ and $j_2 \in \{\lceil \mu \rceil, \dots, d \}$, are non-negative real numbers summing to 1.
Moreover, we also know that $f_W$ is symmetric, i.e., $f_W(k) = f_W(d-k)$, for every $k \in \{0,\dots,d\}$.
This implies that we can write $f_W$ as a convex combination involving only the symmetric extremal points,
\begin{equation} \label{eq:Repr_CMB12_Symm}
    f_W(k)
    =
    \sum_{j=0}^{\lfloor d/2 \rfloor} \lambda_{j} r^D_{j}(k),
\end{equation}
where $\lambda_j \geq 0$, $\lambda_0 + \dots + \lambda_{\lfloor d/2 \rfloor} = 1$, and, for $j \in \{0, \dots, \lfloor d/2 \rfloor \}$, 
\begin{equation*}
    r^D_{j}(k) =
    \begin{cases}
        \frac{1}{2}, &\text{for } k \in \{j,d-j\},
        \\
        0, &\text{otherwise}.
    \end{cases}
\end{equation*}
By equating \eqref{eq:Repr_CMB12_Symm} and \eqref{eq:pmf_CMB} with $r=1/2$, we get, for $j \in \{0, \dots, \lfloor d/2 \rfloor \}$,
\begin{equation} \label{eq:weightsCMB}
    \lambda_j = \frac{2\binom{d}{j}^{\nu}}{\sum_{y=0}^d \binom{d}{y}^{\nu}}.
\end{equation}
Since there exists a one-to-one correspondence between exchangeable Bernoulli distributions and discrete distributions, the exchangeable Bernoulli distribution $f^e$ of a random vector $\II^e$ such that $I^e_1 + \dots + I^e_d \overset{\mathcal{L}}{=} W$ can be written as
\begin{equation}\label{eq:fe}
    f^e
    =
    \sum_{j=0}^{\lfloor d/2 \rfloor} \lambda_{j} r^e_j,
\end{equation}
where 
\begin{equation*}
    r^e_{j}(\ii) =
    \begin{cases}
        \frac{1}{2}\binom{d}{j}^{-1}, &\text{if } i_1+\dots+i_d=j,
        \\
        \frac{1}{2}\binom{d}{j}^{-1}, &\text{if } i_1+\dots+i_d=d-j,
        \\
        0, &\text{otherwise}.
    \end{cases}
\end{equation*}

The above representation includes the exchangeable pmf that is minimal under the supermodular order $r^e_{\lfloor d/2 \rfloor}$ when $\lambda_{\lfloor d/2 \rfloor}=1$, and the upper Fr\'echet bound $r^e_0$ when $\lambda_0 = 1$.
The extremal point $r^e_{\lfloor d/2 \rfloor}$ lies in a polytope contained in the Fréchet class $\BBB_d(1/2)$; specifically, this is the convex polytope of all Bernoulli pmfs in $\BBB_d(1/2)$ associated with random vectors whose sums have distribution $r_{\lfloor d/2 \rfloor}^D$; see \cite{cossette2025extremal}.
As we illustrate at the end of the following example, one can select pmfs in this sub-polytope distinct from $r^e_{\lfloor d/2 \rfloor}$ to construct SR distributions that are no longer exchangeable.
\begin{example}{\bf Three-dimensional case.}
For $d = 3$, there are only two symmetric extremal points of $\mathcal{D}(3/2)$, which are given by 
\begin{equation*}
    r^D_0(k) =
    \begin{cases}
        \frac{1}{2}, &\text{if } k \in \{0,3\},
        \\
        0, &\text{if } k \in \{1,2\},
    \end{cases}
    \quad \text{and} \quad
    r^D_1(k) =
    \begin{cases}
        0, &\text{if } k \in \{0,3\},
        \\
        \frac{1}{2}, &\text{if } k \in \{1,2\},
    \end{cases}
\end{equation*}
and for $W \sim \text{CMB}_3(1/2,\nu)$, from \eqref{eq:Repr_CMB12_Symm} we have
\begin{equation}\label{eq:CMBlinelambda}
   f_{\lambda,W}(k):= f_W(k) = \lambda r^D_0(k) + (1-\lambda) r^D_1(k), \,\,\, \lambda\in[0,1],
\end{equation}
where $\lambda = 1/(1+3^{\nu})$ derives from \eqref{eq:weightsCMB}. The extremal points corresponding to  $\lambda=0$ and $\lambda=1$ are the limit cases for the CMMB distribution $f^e_{\lambda, w}$  underlying $W$, that can be included in the family using this representation. With $\lambda=0$, we have the minimal pmf under the supermodular order, if $\lambda=1$ we have the upper Fr\'echet bound and if $\lambda =0.25$ we have independence.  The extremal point $r_1^D$ is the minimal convex  pmf in $\mathcal{D}(3/2)$.  Therefore,  $f^e_{\lambda,W}$ is a chain with respect to the supermodular order. 

In what follows, we prove that the SR property holds for any $\nu\geq1$, when $d=3$. The pgf of $W$ is given by
\begin{equation*}
    \PPP_W(z) = \frac{1}{2(1+3^{\nu})}(1+z^3) + \frac{3^{\nu}}{2(1+3^{\nu})} (z+z^2).
\end{equation*}
This polynomial is hyperbolic if and only if the joint pgf $\PPP^e$ of the corresponding exchangeable Bernoulli distribution is stable, where
\begin{equation*}
    \PPP^e(z) = \frac{1}{2(1+3^{\nu})}(1+z_1z_2z_3) + \frac{3^{\nu}}{6(1+3^{\nu})} (z_1+z_2+z_3+z_1z_2+z_1z_3+z_2z_3).
\end{equation*}
Since this polynomial is multi-affine and exchangeable, it is stable if and only if condition~\eqref{eq:CondStrongRayleigh} holds for $j_1=1$ and $j_2=2$.
This condition simplifies to 
\begin{equation*}
    (3^{\nu-1} + 3^{\nu-1} x_3)^2 - (3^{\nu-1} + x_3)(1 + 3^{\nu-1} x_3) \geq 0,
\end{equation*}
for every $x_3 \in \RR$.
After standard computations, the condition is verified if and only if $\nu \geq 1$.
By Proposition \ref{prop:parameters_SRproperty}, any CMB distribution with $d=3$, $r \in (0,1)$, and $\nu \geq 1$ satisfies the SR property.

We conclude this example showing how the geometrical representation can be used to construct non-exchangeable SR pmfs.
In \cite{fontana2025geometrical}, the authors prove that the Bernoulli pmfs of vectors $\II$ whose sums $\sum_{j=1}^dI_j$ have distribution minimal in convex order are points in a convex polytope. 
They found its extremal points $r_1^{cx}$, $r_2^{cx}$, and $r_3^{cx}$, which are pmfs in $\BBB_3(1/2)$ such that $r_k^{cx}(\ee_k) = r_k^{cx}(\boldsymbol{1}-\ee_k) = 1/2$, for $k \in \{1,2,3\}$, where $(\ee_1,\ee_2,\ee_3)$ is the canonical basis of $\RR^3$ and $\boldsymbol{1} = (1,1,1)$.
From \eqref{eq:fe} we have that a CMMB pmf in $\BBB_3(1/3)$ has the representation
\begin{equation*}
    f^e_{\lambda,W}(k):= \lambda r^e_0(k) + (1-\lambda) r^e_1(k), \quad \lambda\in[0,1],
\end{equation*}
where $r^e_1$ corresponds to the minimal convex extremal point $r_1^D$.
We therefore have
\begin{equation*} 
    f^e_{\lambda, W}(\xx) = \lambda r^e_0(\xx) + (1-\lambda) (r^{cx}_1(\xx)/3 + r^{cx}_2(\xx)/3 + r^{cx}_3(\xx)/3), \quad \lambda\in[0,1].
\end{equation*}
Starting from $f^e_{\lambda,W}$, we can find a new pmf $f^*$ in a neighborhood of $f^e_{\lambda,W}\in \BBB_3(1/2)$ that is no more exchangeable, but still SR.
For example,
\begin{equation*}
    f^*(k)= 0.1r^e_0(k) + 0.9 ((1/3-0.05)r^{cx}_1(k)+1/3r^{cx}_2(k) + (1/3+0.05) r^{cx}_3(k))
\end{equation*}
belongs to $\BBB_3(1/2)$ and is SR.
This can be easily verified since in the case $d=3$, for any $p \in (0,1)$, the conditions in \eqref{eq:CondStrongRayleigh} simplify. 
After standard computations, a pmf $f \in \mathcal{B}_3(p)$ is SR if and only if, for all permutations $\pi$ of $\{0,1,2\}$, we have
\begin{equation*}
\begin{split}
    \big[ 
    &f(\ee_{\pi(0)})f(\ee_{\pi(1)} + \ee_{\pi(2)}) + f(\ee_{\pi(1)})f(\ee_{\pi(0)} + \ee_{\pi(2)}) - f(\boldsymbol{0})f(\boldsymbol{1}) - f(\ee_{\pi(2)})f(\ee_{\pi(0)} + \ee_{\pi(1
    )})  
    \big]^2
    \\
    &- 4 f(\ee_{\pi(0)})f(\ee_{\pi(1)}) \big[ f(\ee_{\pi(0)} + \ee_{\pi(2)})f(\ee_{\pi(1)} + \ee_{\pi(2)}) - f(\boldsymbol{1})f(\ee_{\pi(2)}) \big] \geq 0,
\end{split}
\end{equation*}
where $\boldsymbol{0} = (0,0,0)$.

\end{example}

\subsection{Stochastic approach}

Consider the Fréchet class $\BBB_d(\pp)$, i.e., the class of $d$-dimensional Bernoulli pmfs with marginal means $\pp = (p_1,\dots,p_d)$.
The aim of this stochastic approach is to build a non-exchangeable Bernoulli random vector that belongs to a given Fréchet class $\BBB_d(\pp)$, with $0 \leq p_1 \leq \dots \leq p_d \leq 1/2$, and that inherits some dependence property of an exchangeable Bernoulli random vector $\JJ$ with $\text{CMMB}_d(1/2,\nu)$ distribution.
Consider a vector $\boldsymbol{K} = (K_1,\dots,K_{d})$ of $d$ independent Bernoulli random variables, with $\boldsymbol{K} \in \BBB_d(\boldsymbol{\theta})$, where $\boldsymbol{\theta} = (\theta_1 \dots, \theta_d)$ with $\theta_m = 2p_m$, for every $m \in \{1,\dots,d\}$.
Let us define a Bernoulli random vector $\II$ such that $I_m = K_m J_m$, for every $m \in \{1,\dots,d\}$.
By construction, it follows that $\II \in \BBB_d(\pp)$.
To study negative dependence properties of the random vector $\II$, we state the following proposition.

\begin{proposition} \label{prop:SR_AddingIndependence}
    Let $\JJ = (J_1,\dots,J_d)$ be a multivariate Bernoulli random vector that satisfies the SR property, and let K be a Bernoulli random variable independent of $\JJ$ with mean $p_K \in (0,1)$.
    The random vector $\II_1 = (J_1\cdot K, J_2, \dots, J_d)$ satisfies the SR property.
\end{proposition}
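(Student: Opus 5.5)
The plan is to compute the joint pgf of $\II_1$ directly and observe that it comes from $\PPP_{\JJ}$ by an affine substitution in the first variable, which preserves real stability. Conditioning on $K$ and using independence,
\begin{equation*}
    \PPP_{\II_1}(\zz) = \E\big[z_1^{J_1K} z_2^{J_2}\cdots z_d^{J_d}\big]
    = (1-p_K)\,\PPP_{\JJ}(1,z_2,\dots,z_d) + p_K\,\PPP_{\JJ}(z_1,z_2,\dots,z_d).
\end{equation*}
Because $\PPP_{\JJ}$ is multi-affine, it is affine in $z_1$. The right-hand side therefore equals $\PPP_{\JJ}\big((1-p_K)\cdot 1 + p_K z_1, z_2,\dots,z_d\big)$. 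In other words, $\PPP_{\II_1}(\zz) = \PPP_{\JJ}(w_1, z_2,\dots,z_d)$ with $w_1 = 1-p_K + p_K z_1$.

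Next I check stability. Take $\zz\in\HHH^d$. Since $p_K>0$, we have $\Im(w_1) = p_K\Im(z_1) > 0$, so $(w_1,z_2,\dots,z_d)\in\HHH^d$. The SR property of $\JJ$ means $\PPP_{\JJ}$ is real stable, so $\PPP_{\JJ}(w_1,z_2,\dots,z_d)\neq 0$. Hence $\PPP_{\II_1}$ does not vanish on $\HHH^d$.

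The coefficients of $\PPP_{\II_1}$ are real, since they are probabilities. It is nonzero, because $\PPP_{\II_1}(\boldsymbol{1})=1$. It is multi-affine. So it is real stable, and $\II_1$ is SR. Equivalently, I can invoke the stability preservers in Lemma 2.4 of \cite{wagner2011multivariate}: the substitution $z_1\mapsto a z_1 + b$ with $a>0$, $b\in\RR$ preserves stability. This is the same type of argument used in the proof of Proposition~\ref{prop:parameters_SRproperty}.

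There is no real obstacle here. The only point needing care is the use of multi-affinity to turn the convex combination into a composition. This step requires $p_K>0$ to keep the substituted argument in the open upper half-plane; if $p_K=0$, $\PPP_{\II_1}$ is simply $\PPP_{\JJ}$ with $z_1=1$ substituted, which would need a limiting argument instead. Iterating the statement coordinatewise then yields that $\II=(K_1J_1,\dots,K_dJ_d)$ is SR.
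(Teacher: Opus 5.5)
Your proof is correct and follows essentially the same route as the paper: condition on $K$, use multi-affinity in $z_1$ to rewrite $(1-p_K)\PPP_{\JJ}(1,z_2,\dots,z_d)+p_K\PPP_{\JJ}(\zz)$ as $\PPP_{\JJ}(1-p_K+p_Kz_1,z_2,\dots,z_d)$, and then note that this substitution maps $\HHH^d$ into itself. The only difference is presentational: the paper makes the multi-affinity step explicit via the decomposition $\PPP_{\JJ}(\zz)=\PPP_{\JJ}(0,z_2,\dots,z_d)+z_1\,\partial\PPP_{\JJ}(\zz)/\partial z_1$.
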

\begin{proof}
    The pgf of $\JJ$ can be written as
    \begin{equation*}
        \PPP_{\JJ}(\zz)
        =
        \E[\zz^{\JJ}] 
        =
        q_1\E (\zz^{\JJ} \vert J_1 = 0) + p_1\E (\zz^{\JJ} \vert J_1 = 1)
        =
        \PPP_{\JJ}(\zz) \big\vert_{z_1=0} + z_1 \frac{\partial}{\partial z_1} \PPP_{\JJ}(\zz),
    \end{equation*}
    where $p_1 = 1-q_1 = \Pr(J_1 = 1)$.
    Therefore, the pgf of $\JJ$ is given by
    \begin{equation*}
    \begin{split}
        \PPP_{\II_1}(\zz) 
        &=
        \E[\zz^{\II_1}] 
        =
        \E[\E (\zz^{\II_1} \vert K)]
        =
        p_K \E[\zz^{\II_1} \vert K=1] + q_K \E[\zz^{\II_1} \vert K=0]
        \\
        &=
        p_K \E[\zz^{\JJ}] + q_K \E[z_2^{J_2} \cdots z_d^{J_d}]
        =
        p_K \PPP_{\JJ}(\zz) + q_K \PPP_{\JJ}(\zz) \big\vert_{z_1=1}
        \\
        &=
        p_K \bigg( \PPP_{\JJ}(\zz) \big\vert_{z_1=0} + z_1 \frac{\partial}{\partial z_1} \PPP_{\JJ}(\zz) \bigg)
        +
        q_K \bigg( \PPP_{\JJ}(\zz) \big\vert_{z_1=0} + 1 \cdot \frac{\partial}{\partial z_1} \PPP_{\JJ}(\zz) \bigg)
        \\
        &=
        \PPP_{\JJ}(\zz) \big\vert_{z_1=0} + (q_K + p_Kz_1) \frac{\partial}{\partial z_1} \PPP_{\JJ}(\zz)
        =
        \PPP_{\JJ}(q_K + p_K \cdot z_1,z_2,\dots,z_d),
    \end{split}
    \end{equation*}
    where $q_K = 1-p_K$.
    If $(z_1,\dots,z_d) \in \mathcal{H}^d$, then $(q_K + p_K \cdot z_1,z_2,\dots,z_d) \in \mathcal{H}^d$, and
    \begin{equation*}
        \PPP_{\II_1}(z_1,z_2,\dots,z_d) = \PPP_{\JJ}(q_K + p_K \cdot z_1,z_2,\dots,z_d) \neq 0,
    \end{equation*}
    and $\II_1$ is SR.
\end{proof}
By iteratively applying Proposition \ref{prop:SR_AddingIndependence} to each component of the random vector $\JJ$, we deduce that $\II$ satisfies the SR property whenever $\JJ$ does.
Moreover, the pgf of $\II$ reads
\begin{equation*}
    \mathcal{P}_{\boldsymbol{I}}(\boldsymbol{z}) 
    =
    \mathcal{P}_{\boldsymbol{J}}(\mathcal{P}_{K_1}(z_1), \dots, \mathcal{P}_{K_{d}}(z_{d})) 
    =
    \mathcal{P}_{\boldsymbol{J}}((1-\theta_1 + \theta_1 z_1), \dots, (1-\theta_{d} + \theta_{d} z_{d})).
\end{equation*}

\bibliographystyle{apalike}
\bibliography{reference}

\end{document}